\def\smono[#1]{\ar@{->}[#1]|@{|}}
\newcommand{\xymono}{\ar@{ >->}}
\newcommand{\xyepi}{\ar@{->>}}
\let\oldtocsection=\tocsection
\let\oldtocsubsection=\tocsubsection
\let\oldtocsubsubsection=\tocsubsubsection
\renewcommand{\tocsection}[2]{\hspace{0em}\oldtocsection{#1}{#2}}
\renewcommand{\tocsubsection}[2]{\hspace{1em}\oldtocsubsection{#1}{#2}}
\renewcommand{\tocsubsubsection}[2]{\hspace{2em}\oldtocsubsubsection{#1}{#2}}
\newtheorem{theorem}{Theorem}[section]
\newtheorem{lemma}[theorem]{Lemma}
\newtheorem{corollary}[theorem]{Corollary}
\newtheorem{proposition}[theorem]{Proposition}
\theoremstyle{definition}
\newtheorem{remark}[theorem]{Remark}
\numberwithin{equation}{section}
\newcommand{\spec}{\mathrm{Spec\hskip .5mm }}
\newcommand{\Z}{\mathbb{Z}}
\newcommand{\HL}{\mathcal{HL}}
\newcommand{\colim}{\operatorname{colim}}
\newcommand{\Spec}{\operatorname{Spec}}
\newcommand{\Hom}{\operatorname{Hom}}
\newcommand{\Fun}{\operatorname{Fun}}
\newcommand{\Vect}{\operatorname{Vect}}
\newcommand{\Ch}{\operatorname{Ch}}
\newcommand{\aaa}{\mathbb{A}}
\newcommand{\ppp}{\mathbb{P}}
\newcommand{\rk}{\operatorname{rk}}
\newcommand{\GW}{\operatorname{\mathcal{GW}}}
\newcommand{\KSp}{\operatorname{\mathcal{K}Sp}}
\newcommand{\F}{\mathbb{F}}
\newcommand{\cone}{\operatorname{cone}}
\newcommand{\proj}{\operatorname{proj}}
\newcommand{\on}{\hspace{.5ex}\operatorname{on}\hspace{.5ex}}
\newcommand{\Met}{\operatorname{Met}}
\newcommand{\qis}{\operatorname{qis}}
\newcommand{\ooo}{\mathfrak{o}}
\DeclareFontFamily{U}{cbgreek}{}
\DeclareFontShape{U}{cbgreek}{m}{n}{
        <-6>    grmn0500
        <6-7>   grmn0600
        <7-8>   grmn0700
        <8-9>   grmn0800
        <9-10>  grmn0900
        <10-12> grmn1000
        <12-17> grmn1200
        <17->   grmn1728
      }{}
\DeclareFontShape{U}{cbgreek}{bx}{n}{
        <-6>    grxn0500
        <6-7>   grxn0600
        <7-8>   grxn0700
        <8-9>   grxn0800
        <9-10>  grxn0900
        <10-12> grxn1000
        <12-17> grxn1200
        <17->   grxn1728
      }{}
\DeclareRobustCommand{\Qoppa}{%
  \text{\usefont{U}{cbgreek}{\normalorbold}{n}\symbol{21}}%
}
\newcommand{\normalorbold}{%
  \ifnum\pdf@strcmp{\math@version}{bold}=\z@ bx\else m\fi
}
\title[$\Qoppa^{gs}$ vs $\Qoppa^s$]{Symmetric versus genuine symmetric forms in Hermitian $K$-theory}
 \author{Marco Schlichting}
 \address{Marco Schlichting, Mathematics Institute,
Zeeman Building,
University of Warwick,
Coventry CV4 7AL, UK} 
\thanks{}
\email{m.schlichting@warwick.ac.uk}
\subjclass{}
\keywords{}
\begin{document}
\bibliographystyle{alpha}

\begin{abstract}
We show that for finite dimensional regular Noetherian rings that contain a field or are smooth over a Dedekind domain, the comparison map from the Hermitian $K$-theory of genuine symmetric forms to that of  symmetric forms is an isomorphism in degrees $\geq -1$ and a monomorphism in degree $-2$.
In particular, the spaces of Hermitian $K$-theory of genuine symmetric forms and the symplectic $K$-theory space are homotopy invariant for such rings.
\end{abstract}

\maketitle

\tableofcontents

\section{Introduction}

Hermitian $K$-theory is the study of the homotopy type of spaces associated with symplectic and symmetric forms and their automorphisms.
For instance, for a local commutative ring $R$, the genuine symplectic and the genuine symmetric Hermitian $K$-theory spaces $\KSp(R)$ 
and $\GW(R)$ have the homotopy types of  
$$\Z \times BSp(R)^+\hspace{4ex}\text{and} \hspace{4ex}GW_0(R) \times BO_{\infty,\infty}(R)^+$$ 
(the latter if the units of $R$ can be represented by differences of sums of squares, e.g, $2\in R^*$, $R=\Z_{(2)}$, or $R$ a perfect field of characteristic $2$).
Here, the groups $O_{\infty,\infty}(R)=\colim_nO_{n,n}(R)$ and $Sp(R)=\colim_nSp_{2n}(R)$ are the unions of the orthogonal  $O_{n,n}(R)$ and symplectic groups $Sp_{2n}(R)$ that are the automorphism groups of the bilinear forms $\left(\begin{smallmatrix}1 & 0 \\ 0 & -1\end{smallmatrix}\right)^{\perp n}$ and $\left(\begin{smallmatrix} 0 & 1 \\ -1 & 0 \end{smallmatrix}\right)^{\perp n}$ on $R^{2n}$, and $+$ denotes Quillen's plus construction \cite{Grayson:KII}.
Moreover, $GW_0(R)$ is the Grothendieck-Witt group of $R$, that is, the abelian group associated with the monoid of isometry classes of non-degenerate symmetric forms on finitely generated projective $R$-modules.
For a general commutative ring $R$, the spaces $\GW(R)$ and $\KSp(R)$ are the group completions, aka $S^{-1}S$-constructions \cite{Grayson:KII}, of the symmetric monoidal groupoids $S$ of non-degenerate symmetric bilinear and symplectic forms, respectively.

When $2\in R^*$, the spaces $\GW(R)$ and $\KSp(R)$ are reasonably well understood \cite{karoubi:annals}, \cite{myJPAA} and form part of a cohomology theory that satisfies Nisnevich descent, projective bundle formulas, blow-up formulas and is homotopy invariant on regular rings (with $2\in R^*$); see \cite{myJPAA}.
In particular, they are represented in Morel-Voevodsky's unstable $\aaa^1$-homotopy category \cite{Hornbostel}, \cite{GirjaMe} which is the starting point for its use in the classification of vector bundles on smooth affine algebras \cite{AsokFaselICM}.

Dispensing with the hypothesis $2\in R^*$, due to the recent work of \cite{CDH+I}, \cite{CDH+II}, \cite{CDH+III}, \cite{HebestreitSteimle}, the spaces  $\GW(R)$ and $\KSp(R)$ are now the infinite loop spaces of spectra $GW^{[0]}(R)$ and $GW^{[2]}(R)$ that form part of a sequence $GW^{[r]}(R)$, $r\in \Z$, that are related to algebraic $K$-theory $K(R)$ 
by the {\em fundamental fibre sequence} of spectra \cite[Main Theorem]{CDH+II}
\begin{equation}
\label{eq:FundSeq}
GW^{[r]}(R) \stackrel{\operatorname{hyp}}{\longrightarrow} K(R) \stackrel{\operatorname{fgt}}{\longrightarrow} GW^{[r+1]}(R)
\end{equation}
generalising Bott periodicity as formulated in \cite{myJPAA}.
Here, we donote by $GW^{[r]}(R)$ the Hermitian $K$-theory spectrum 
$$GW^{[r]}(R)=GW(R,(\Qoppa^{gs})^{[r]})$$ 
of $r$-th shifted {\em genuine symmetric forms} on $R$ as defined in \cite{CDH+I}, \cite{CDH+II}, \cite{CDH+III}.
The spectra $GW^{[r]}(R)$ have been generalised to quasi-compact and quasi-separated schemes $X$ in \cite{CHN} to yield spectra $GW^{[r]}(X) = GW(X,(\Qoppa^{gs})^{[r]})$
whose Karoubi-localizing variants satisfy Nisnevich descent and a projective line bundle formula. 
When $X$ is divisorial and $r=0$, the infinite loop space $\GW^{[0]}(X)$ of $GW^{[0]}(X)$ is the Hermitian $K$-theory space studied in \cite{myMV} which is the Hermitian $K$-theory in the sense of \cite{myHermKex} of the exact category of finite rank vector bundles on $X$ with duality $E \mapsto E^*=Hom(E,O_X)$; see \cite{CHN}, \cite{FormIII}.

In the series of papers \cite{CDH+I}, \cite{CDH+II}, \cite{CDH+III}, \cite{CHN} the authors also construct a variant of Hermitian K-theory which we will denote by $KO^{[r]}(X)$. 
It is the Hermitian $K$-theory spectrum
$$KO^{[r]}(X)=GW(X,(\Qoppa^s)^{[r]})$$ 
of $r$-th shifted {\em symmetric} forms on $X$; see \cite{CHN}.
The theories $KO^{[r]}$ seem to have better properties than $GW^{[r]}$.
Indeed, besides the fundamental fibre sequence \cite[Main Theorem]{CDH+II}, Nisnevich descent (for its Karoubi-localizing variant) and projective bundle formulas, the authors also show in \cite{CHN} that $KO^{[r]}$ satisfies devissage and is homotopy invariant on finite dimensional regular Noetherian schemes.
In particular, these theories are representable in Morel-Voevodsky's $\aaa^1$-homotopy theory without assuming $2 \in \Gamma(X,O_X)^*$.

Whereas $GW^{[r]}(X)$ is linked to the study of classical objects of symmetric and symplectic forms (for $r=0,2$), a geometric meaning of $KO^{[r]}(X)$ is less clear.
However, the canonical natural transformation of quadratic functors $\Qoppa^{gs} \to \Qoppa^{s}$ from genuine symmetric to symmetric forms \cite{CDH+I} induces a natural comparison map 
$$GW^{[r]}(X) \to KO^{[r]}(X)$$
of associated Hermitian $K$-theories.
When $2 \in \Gamma(X,O_X)^*$, this map is an equivalence \cite{CDH+I}, essentially by definition.
For general rings (with involution), the comparison map was studied in detail in \cite{CDH+III} where it was shown that for regular Noetherian rings of dimension $d$
the comparison map is an isomorphism in degrees $\geq d+r-1$ and a monomorphism in degree $d+r-2$.
Our main results, Theorems \ref{thm:MainL} and \ref{thm:MainGW} below, improve these bounds.
In particular, our bounds do not depend on the dimension $d$ of $R$.
The caveat is that we only consider rings with trivial involution (or with involutions twisted by a line bundle).
We denote by $L_n^{gs}(R)$ and $L_n^s(R)$ the $L$-theory groups of genuine symmetric and symmetric forms on $R$; see \cite{CDH+III}.

\begin{theorem}
\label{thm:MainL}
Let $R$ be a regular Noetherian domain of finite Krull dimension. 
If $2\notin R^*$ assume that $R$ contains a field or is smooth over a Dedekind domain.
Then the natural map
\begin{equation}
\label{eq:thm:MainL}
L_n^{gs}(R) \to L_n^s(R)
\end{equation}
is an isomorphism for $n \geq -1$ and a monomorphism for $n=-2$.
If moreover, $2\in R$ is a non-zero divisor, then the map (\ref{eq:thm:MainL}) is an isomorphism for $n \geq -2$ and a monomorphism for $n=-3$.
\end{theorem}

We prove Theorem \ref{thm:MainL} in a slightly more general situation below; see Theorem \ref{thm:MainLText}.
All we need to assume is that the mixed characteristic $(0,2)$ stalks $R_{\wp}$ of $R$ are weakly regular over a dvr, that is, they contain a dvr with unifomizer $\pi$ which is part of a regular system of parameters for $R_{\wp}$.

We note that the statement of the theorem is false, in general, if the involution on $R$ is non-trivial; see \cite[Example 1.3.11]{CDH+III}.
Even if the involution on $R$ is trivial, the comparison map is not an isomorphism for $R=\Z/4$ and all $n\geq -2$; see \cite{Read}.
The bounds in Theorem \ref{thm:MainL} are sharp in characteristic $2$.
Indeed, for a local ring $R$ containing a field of characteristic $2$, we have $L_{-2}^{gs}(R)=0$ (by Lemma \ref{lem:Lgs-1} below) whereas $L_{-2}^{s}(R)=L_0^{s}(R)\neq 0$, by definition.

As a corollary of Theorem \ref{thm:MainL}, we obtain the following; see Corollary \ref{cor:MainGWText} in the text.

\begin{theorem}
\label{thm:MainGW}
Let $X$ be a finite dimensional integral regular Noetherian separated scheme.
If $2 \notin \Gamma(X,O_X)^*$ assume that $X$ is defined over a field or is smooth over a Dedekind domain.
Then the comparison maps from genuine symmetric to symmetric Hermitian $K$-theory
\begin{equation}
\label{eq:thm:MainGW}
GW^{[r]}_n(X) \to KO^{[r]}_n(X)
\end{equation}
are isomorphisms for $n \geq r-1$ and a monomorphism for $n=r-2$.
If, moreover, $0\neq 2 \in \Gamma(X,O_X)$, then the maps (\ref{eq:thm:MainGW}) are isomorphisms for $n \geq r-2$ and a monomorphism for $n=r-3$.
\end{theorem}

Again, we only really need to assume that the mixed characteristic $(0,2)$ stalks of $X$ are weakly regular over a dvr; see Corollary \ref{cor:MainGWText}.
In particular, for such schemes, the theories of genuine symmetric forms $GW^{[0]}$ and of symmetric forms $KO^{[0]}$ only differ in negative degrees, and the spectra $GW^{[0]}$ and $KO^{[0]}$ are simply different deloopings of the same hermitian $K$-theory space $\GW$ of genuine symmetric forms.

\begin{corollary}
\label{cor:htpyInv}
The hermitian $K$-theory space of genuine symmetric forms $\GW$ and the symplectic $K$-theory space $\KSp$ are homotopy invariant on regular Noetherian rings and separated schemes of finite Krull dimension whose mixed characteristic $(0,2)$ stalks are weakly regular over a dvr.
That is,
for such a regular Noetherian ring $R$, the inclusion $R \subset R[T]$ induces homotopy equivalences
$$\GW(R) \stackrel{\sim}{\longrightarrow} \GW(R[T]),\hspace{6ex} \KSp(R) \stackrel{\sim}{\longrightarrow} \KSp(R[T]).$$
\end{corollary}

In consequence, the classical Witt group of symmetric bilinear forms $W(X)$ of Knebusch \cite{Knebusch} and the integral homology $H_*(Sp(R))$ of the infinite symplectic group are homotopy invariant on regular Noetherian schemes and rings $X$ and $R$ of finite dimension that are smooth over a Dedekind domain or that are defined over a field.
In particular, the genuine symmetric and symplectic $K$-theory spaces are respresentable in Morel-Voevodsky's unstable $\aaa^1$-homotopy theory over a field or Dedekind domain.
See Remark \ref{rmk:motivicRep} for a discussion of genuine Hermitian $K$-theory in the stable motivic homotopy category.
\vspace{2ex}

Our proofs of Theorems \ref{thm:MainL} and \ref{thm:MainGW} are inspired by the literature around the Gersten conjecture \cite{CHK}, \cite{SchmidtStrunk}.
But our approach is simpler.
Instead of Nisnevich descent we will use Flat excision (Proposition \ref{prop:FlatExc}), and we replace Gabber's Geometric Presentation Lemma with the use of the Cohen Structure Theorems for complete regular local rings (Proposition \ref{prop:CohenFlattening}).
Also, we need to work with the homotopy fibre of $L^{gs} \to L^s$ since the Gersten conjecture fails to hold for $L^{gs}$ and $L^s$ in the mixed characteristic $(0,2)$ case; see Remark 
\ref{rem:LThGersten}.

\section{Some low degree computations}

\vspace{2ex}
{\em All rings in this article are assumed to be commutative.}

\begin{lemma}
\label{lem:Lgs-1}
Let $R$ be a local ring. Then $$GW^{[2]}_1(R)=GW^{[1]}_0(R)=L^{gs}_{-1}(R)=L^{gs}_{-2}(R)=0$$
\end{lemma}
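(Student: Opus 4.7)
The plan is to derive all four vanishings in parallel from the long exact sequences induced by the fundamental fibre sequence \eqref{eq:FundSeq}, reducing them to a base vanishing of low-degree genuine symmetric $L$-theory.

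For a local ring $R$, I first collect the inputs: $K_n(R)=0$ for $n<0$, $K_0(R)=\Z$, $K_1(R)=R^\times$, and the surjectivity of $\operatorname{hyp}\colon GW^{[0]}_0(R)\to\Z$ (witnessed by the form $\langle 1\rangle$, whose underlying module is $[R]$). Additionally, the cofibre sequence $K(R)_{hC_2}\to GW^{[0]}(R)\to L^{gs}(R)$ coming from the Poincar\'e category formalism of \cite{CDH+I}, combined with connectivity of $K(R)$ (hence of $K(R)_{hC_2}$), yields $GW^{[0]}_n(R)\cong L^{gs}_n(R)$ for $n<0$.

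Chasing the long exact sequence of \eqref{eq:FundSeq} at $r=0$ around degrees $0,-1$, the rank-surjectivity forces the map $\Z\to GW^{[1]}_0$ to be zero, while $K_{-1}(R)=K_{-2}(R)=0$ makes the connecting maps into isomorphisms, yielding
\[GW^{[1]}_0(R)\cong GW^{[0]}_{-1}(R)\cong L^{gs}_{-1}(R),\qquad GW^{[1]}_{-1}(R)\cong GW^{[0]}_{-2}(R)\cong L^{gs}_{-2}(R).\]
At $r=1$, degree $1$, the long exact sequence reads
\[GW^{[1]}_1(R)\stackrel{\operatorname{hyp}}{\longrightarrow} R^\times\to GW^{[2]}_1(R)\to GW^{[1]}_0(R)\to\Z,\]
which, under the vanishing of $GW^{[1]}_0(R)$, identifies $GW^{[2]}_1(R)$ with $\operatorname{coker}(GW^{[1]}_1(R)\to R^\times)$. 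Thus all four claims reduce to (a) the $L$-theory vanishing $L^{gs}_{-1}(R)=L^{gs}_{-2}(R)=0$ and (b) surjectivity of $\operatorname{hyp}\colon GW^{[1]}_1(R)\to R^\times$.

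For (a), I would invoke the explicit low-degree description of the genuine symmetric quadratic functor $\Qoppa^{gs}$ on free modules over a local ring from \cite{CDH+III} and compute via the associated Tate-cohomology spectral sequence that $L^{gs}_n(R)=0$ for $n=-1,-2$; this is the genuine-symmetric analog of the classical fact that Balmer's shifted Witt groups $W^{-1}(R)$ and $W^{-2}(R)$ vanish for local $R$ with $2\in R^\times$. For (b), I would construct, for each $u\in R^\times$, a $(\Qoppa^{gs})^{[1]}$-invariant automorphism of a rank-two hyperbolic module whose underlying $K_1$-class equals $u$. The main obstacle is (b): the naive construction via diagonal automorphisms of the hyperbolic plane only realises images factoring through the norm map from $K_1(R)$, which covers only the squares $(R^\times)^2$; producing the remaining generators requires exploiting the extra flexibility of the genuine refinement $(\Qoppa^s)^{[1]}\to(\Qoppa^{gs})^{[1]}$, or alternatively appealing to the classical fact that the symplectic group $\operatorname{Sp}(R)$ of a local ring coincides with its elementary subgroup, so that symplectic $K_1$ vanishes.
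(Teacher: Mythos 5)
The reduction framework you set up via the fundamental fibre sequence is sound — the observation that $K_{-1}(R)=0$ forces $GW^{[1]}_0(R)\cong GW^{[0]}_{-1}(R)\cong L^{gs}_{-1}(R)$, and that $GW^{[0]}_0(R)\to K_0(R)$ being split surjective (via $\langle 1\rangle$) makes the connecting map an isomorphism, is a correct and useful piece of bookkeeping. However, the proof as written has a genuine gap at exactly the point where the real content lives.

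Your step (a), the direct vanishing $L^{gs}_{-1}(R)=L^{gs}_{-2}(R)=0$ for a \emph{general} local ring, is asserted rather than proved. The proposed analogy with Balmer's shifted Witt groups $W^{-1},W^{-2}$ is not adequate here, because that vanishing requires $2\in R^\times$; indeed the whole reason $\Qoppa^{gs}$ is a separate object is that it disagrees with $\Qoppa^s$ (and $\Qoppa^q$) precisely at the prime $2$, and it is at the prime $2$ and in residue characteristic $2$ that the statement is subtle. A Tate-cohomology spectral sequence computation of $L^{gs}$ on the derived category of a local ring of residue characteristic $2$ is not an off-the-shelf fact, and as written this step does not constitute a proof. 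In step (b), you correctly diagnose that a naive explicit construction of hermitian automorphisms hitting all of $R^\times$ is problematic, and you land on the right escape hatch — classical symplectic $K$-theory of local rings — but present it as merely one of two options. In fact it is the essential input.

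The paper's proof uses precisely two classical facts about symplectic forms over local rings: (i) $GW^{[2]}_1(R)=K_1\mathrm{Sp}(R)=0$ (Gaussian elimination puts $\mathrm{Sp}(R)=\mathrm{ESp}(R)$), and (ii) $GW^{[2]}_0(R)=K_0\mathrm{Sp}(R)=\Z$ and the map $K_0(R)\to GW^{[2]}_0(R)$ is an isomorphism $\Z\to\Z$ (every nondegenerate symplectic form over a local ring is hyperbolic). Feeding these into the fundamental sequence at $r=1$ gives $GW^{[1]}_0(R)=0$, and the surjections $GW^{[n]}_0(R)\twoheadrightarrow L^{gs}_{-n}(R)$ (from the cofibre sequence $K_{hC_2}\to GW\to L$ and connectivity of $K$) then give $L^{gs}_{-1}(R)=0$ immediately and $L^{gs}_{-2}(R)=0$ because the image of $K_0\to GW^{[2]}_0$ is everything. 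Note the logical order is the reverse of yours: the $L$-theory vanishings are \emph{consequences} of the symplectic $K_0$ and $K_1$ facts, not inputs to them. Once you accept (i) and (ii), your $r=0$ chase and the Tate-cohomology step become unnecessary, and the argument collapses to the paper's three-line proof.
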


\begin{proof}
Recall that for local rings, the symplectic $K_0$ is $\Z$ as every non-degenerate symplectic form over a local ring is hyperbolic, and its symplectic $K_1$ vanishes.
Now, the lemma follows from the exact sequence of the fundamental sequence (\ref{eq:FundSeq})
$$0=GW_1^{[2]}(R) \longrightarrow GW^{[1]}_0(R)  \stackrel{0}{\longrightarrow} K_0(R)  \stackrel{\cong}{\longrightarrow} GW_0^{[2]} (R) \longrightarrow L^{gs}_{-2}(R)  \to 0$$
and the surjection $GW^{[n]}_0(R)  \twoheadrightarrow  L^{gs}_{-n}(R) $. 
\end{proof}

\begin{lemma}
\label{lem:L-1DVR}
Let $V$ be a DVR or a field, then $L^s_{-1}(V)=L^q_{-3}(V)=L^{gs}_{-3}(V)=0$.
If $V$ is a DVR of mixed characteristic $(0,2)$, then $L^s_{-2}(V)=0$.
\end{lemma}

\begin{proof}
The $L^q$ statements can be deduced from \cite[Proposition 2.3.9]{CDH+III} and its proof.
For fields, we have $L^s_{-1}=0$, by \cite[Corollary 1.3.8]{CDH+III}.
Moreover, $L^q_{-3}=L^{gs}_{-3}$ for any ring \cite[Corollary 1.2.12]{CDH+III}.
Let $V$ now be a dvr, and let $F$ and $k$ be the field of fractions and the residue field of $V$.
Consider the exact localisation sequence (using devissage for $L^s$)
$$L^s_0(F) \twoheadrightarrow L^s_0(k) \stackrel{0}{\longrightarrow} L_{-1}^s(V) \to L_{-1}^s(F)=0$$
where $L^s_{-1}(F)=0$, by \cite[Corollary 1.3.8]{CDH+III}, and $L^s_0(F) \twoheadrightarrow L^s_0(k)$ is surjective since it is $L^{gs}_0(F) \twoheadrightarrow L^{gs}_0(k)$ which is well know to be surjective, for instance since the next term in the exact sequence is $L^{gs}_{-1}(V)$ which vanishes, by Lemma \ref{lem:Lgs-1}.
Finally, let $V$ be a DVR of mixed characteristic $(0,2)$ with field of fractions $F$ of characteristic $0$.
Then  the exact localisation sequence (using devissage for $L^s$)
$$0= L^s_{-1}(k) \to L_{-2}^s(V) \to L_{-2}^s(F)=0$$
shows that $L_{-2}^s(V)=0$.
\end{proof}

\begin{corollary}
\label{cor:LvsLDVR}
Let $R$ be a field or a DVR, then the canonical map $L^{gs}_n(R) \to L^{s}_n(R)$ is an isomorphism for $n\geq -1$ and a monomorphism $n=-2$.
If moreover $0\neq 2\in R$, then $L^{gs}_n(R) \to L^{s}_n(R)$ is an isomorphism for $n=-2$ and a monomorphism for $n=-3$.
\end{corollary}

\begin{proof}
Assume first that $n\geq -2$.
For fields, the statement is \cite[Theorem 6, Corollary 1.3.10]{CDH+III}, and for DVRs the statement holds for $n\geq 0$, by loc.cit.
The other statements follow from Lemmas \ref{lem:Lgs-1} and \ref{lem:L-1DVR}.
\end{proof}

\begin{lemma}
\label{lem:GW10ZX}
We have $GW^{[1]}_0(\Z[X])=GW^{[1]}_0(\Z)=0$.
\end{lemma}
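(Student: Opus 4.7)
The plan is to pin down $GW^{[1]}_0(\Z[X])$ via the long exact sequence attached to the fundamental fibre sequence (\ref{eq:FundSeq}) with $r=1$,
$$K_1(\Z[X]) \to GW^{[2]}_1(\Z[X]) \to GW^{[1]}_0(\Z[X]) \to K_0(\Z[X]) \to GW^{[2]}_0(\Z[X]),$$
in the spirit of the proof of Lemma \ref{lem:Lgs-1}. The strategy is to show that the leftmost map is surjective and the rightmost is injective; exactness then squeezes the middle term to zero.

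The right end is easy. By Seshadri's theorem every finitely generated projective $\Z[X]$-module is free, so $K_0(\Z[X]) = \Z$, and the composite of $K_0 \to GW^{[2]}_0$ with the forgetful map $GW^{[2]}_0 \to K_0$ is multiplication by $2$ on $\Z$; hence $K_0(\Z[X]) \to GW^{[2]}_0(\Z[X])$ is injective. Therefore the boundary $GW^{[1]}_0(\Z[X]) \to K_0(\Z[X])$ vanishes and
$$GW^{[1]}_0(\Z[X]) = \coker\bigl(K_1(\Z[X]) \to GW^{[2]}_1(\Z[X])\bigr).$$

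The main obstacle is the surjectivity of $K_1(\Z[X]) \to GW^{[2]}_1(\Z[X])$, where $GW^{[2]}_1$ is essentially stable symplectic $K_1$. I would use the retraction $X \mapsto 0$ of the inclusion $\Z \hookrightarrow \Z[X]$ to split both source and target as $(-)(\Z) \oplus N(-)(\Z)$, reducing the surjectivity to two substatements: that $K_1(\Z) \to GW^{[2]}_1(\Z)$ is surjective, and that $NGW^{[2]}_1(\Z)=0$. The complementary vanishing $NK_1(\Z)=0$ on the $K$-side is Quillen's homotopy invariance of algebraic $K$-theory on the regular ring $\Z$.

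The first substatement amounts to the classical identification $GW^{[2]}_1(\Z) = \Z/2$, generated by the hyperbolic image of $-1 \in K_1(\Z)$, which can be extracted from the Hermitian $K$-theory computations for $\Z$ in \cite{CDH+III} (or from Vaserstein--Matsumoto-style presentations of the stable symplectic groups). The second is a Kopeiko-/Suslin-type homotopy invariance of stable symplectic $K_1$ applied to $\Z$. The chief technical difficulty is verifying that these symplectic-$K_1$ inputs translate directly into the $(\Qoppa^{gs})^{[2]}$-formalism used here; as a cleaner alternative, one could split the problem into $GW^{[1]}_0(\Z) = 0$ (from the fundamental sequence over $\Z$, together with the computation of $GW^{[2]}_1(\Z)$ in \cite{CDH+III}) and $\widetilde{GW}^{[1]}_0(\Z[X]) = 0$ (from the $N$-version of the same sequence combined with $NK_i(\Z)=NGW^{[2]}_i(\Z)=0$ for $i=0,1$), tackling each summand separately.
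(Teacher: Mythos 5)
Your proposal uses the same fundamental exact sequence as the paper and handles the right end (injectivity of $K_0(\Z[X])\to GW^{[2]}_0(\Z[X])$ via the rank/forgetful map being multiplication by $2$) identically; the only real difference is at the left end, and there it is essentially a repackaging rather than a different route. The paper simply quotes Grunewald--Mennicke--Vaserstein to get $GW^{[2]}_1(\Z[X]) = K_1Sp(\Z[X]) = K_1Sp(\Z) = 0$ directly, whereas you aim for the (a priori weaker) surjectivity of $K_1(\Z[X]) \to GW^{[2]}_1(\Z[X])$ via the splitting $(-)(\Z)\oplus N(-)(\Z)$. But the key technical input you need for the $N$-piece --- ``Kopeiko/Suslin-type homotopy invariance of stable symplectic $K_1$'' over $\Z$ --- is precisely the Grunewald--Mennicke--Vaserstein result the paper cites, so nothing is gained.

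There is one factual error in your sketch that you should correct, even though it happens not to invalidate the conclusion: $GW^{[2]}_1(\Z)$ is \emph{not} $\Z/2$. The stable symplectic group $Sp(\Z)$ is perfect (the groups $Sp_{2g}(\Z)$ are perfect for $g\ge 3$), so $K_1Sp(\Z) = Sp(\Z)^{\mathrm{ab}} = 0$; this is exactly what the paper (and Grunewald--Mennicke--Vaserstein) use. Your ``first substatement'' --- surjectivity of $K_1(\Z)\to GW^{[2]}_1(\Z)$ --- is then trivially true because the target vanishes, so your argument still closes, but the claimed identification with $\Z/2$ (and the interpretation as the hyperbolic image of $-1$) is wrong and would be a genuine bug if that group were used quantitatively elsewhere. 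Once this is fixed, your proof and the paper's collapse to the same argument.
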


\begin{proof}
By \cite[Corollary 1.4]{Grunewald+MathZ}, we have $GW^{[2]}_1(\Z[X]) = K_1Sp(\Z[X]) = K_1Sp(\Z)=0$.
We conclude using the fundamental exact sequence (\ref{eq:FundSeq}) in low degrees
$$GW^{[2]}_1(\Z[X]) \to GW^{[1]}_0(\Z[X]) \to K_0(\Z[X]) \rightarrowtail GW^{[2]}_0(\Z[X])$$
where the last map is injective since composition with the forgetful map $GW^{[2]}_0(\Z[X]) \to K_0(\Z[X])=\Z$ is multiplication by $2$ which is injective. 
As a direct summand of $GW^{[1]}_0(\Z[X])$, the group $GW^{[1]}_0(\Z)$ vanishes too.
\end{proof}

\section{Flat excision and the Projective line bundle formula}

Tensor product of genuine symmetric forms makes the spectrum $GW^{[0]}(X)$ into a commutative ring spectrum.
More generally, there are natural parings of spectra
$$\cup: GW^{[r]}(X) \wedge GW^{[s]}(X) \to GW^{[r+s]}(X)$$
such that $x \cup y = (-1)^{pq}\langle -1\rangle^{rs}y \cup x$ for $x \in GW^{[r]}_p(X)$ and $y\in GW^{[s]}_q(X)$.
Consider the map of exact sequences of abelian groups
$$\xymatrix{
K_0(\Z)\hspace{1ex} \ar@{>->}[r] \ar[d]^1 & GW^{[-4]}_0(\Z) \ar@{->>}[d]^{\rk} \ar@{->>}[r] & L^{gs}_4(\Z) \ar@{->>}[d]^{\rk} \ar[r] & 0\\
K_0(\Z)\hspace{1ex}  \ar@{>->}[r]_2 & K_0(\Z) \ar@{->>}[r] & \Z/2 \ar[r] & 0}
$$
where the top row is the exact sequence defining $L_4^{gs}$, and the vertical maps are the forgetful rank maps.
The right square is cartesian, and there is a unique element $\sigma \in GW^{[-4]}_0(\Z)$ that is sent to $1 \in K_0(\Z)=\Z$ and $1 \in L^{gs}_4(\Z)=L_4^s(\Z)=W(\Z)=\Z$.
The element $\sigma$ considered as an element of $KO^{[-4]}_0(\Z)$ is invertible in the bigraded theory $KO^{[*]}_*(\Z)$ since it is invertible in $K(\Z)$ and $L^{s}(\Z)$.
It follows that the natural map $GW \to KO$ factors through the map 
$$GW(X)[\sigma^{-1}] \stackrel{\sim}{\longrightarrow} KO(X)$$
of bigraded theories which is an equivalence since $L^{gs}[\sigma^{-1}]\cong L^s$.
Similarly, let $\eta\in GW^{[-1]}_{-1}(\Z)$ be the image of $\langle 1 \rangle \in GW^{[0]}_0(\Z)$ under the boundary map $GW^{[0]}_0(\Z) \to GW^{[-1]}_{-1}(\Z)$ of the fundamental sequence (\ref{eq:FundSeq}) for $\Z$.
Then 
$$L^{gs}(X) \simeq GW(X)[\eta^{-1}],\hspace{4ex} L^s(X) =KO(X)[\eta^{-1}] = GW(X)[(\sigma\eta)^{-1}].$$
In particular, $KO$, $L^{gs}$ and $L^s$ are algebra spectra over $GW$.
For a closed subset $Z \subset X$ of a scheme $X$ and a functor $F$ from (divisorial) schemes to spectra, we denote by $F(X\on Z)$ the fibre of the restriction map $F(X) \to F(X-Z)$.
Furtheremore, we denote by $HL$ the fibre of $L^{gs} \to L^s$.
Thus, we have spectra $GW^{[n]}(X\on Z)$, $L^{gs}(X\on Z)$, $KO^{[n]}(X\on Z)$, $L^{s}(X\on Z)$ and $HL(X\on Z)$ all of which are module spectra over $GW(X)$.
\vspace{1ex}

Our proof of Theorems \ref{thm:MainL} and \ref{thm:MainGW} relies on the comparison of Grothendieck-Witt and $L$-theory of a local ring and its completion which is achieved by means of the following proposition.
In case the map $A \to B$ is etale, the theorem is also a consequence of Nisnevich descent proved in \cite[Corollary 4.4.2]{CHN}.

\begin{proposition}[Flat excision]
\label{prop:FlatExc}
Let $(A, m_A,k_A) \to (B,m_B,k_B)$ be a flat homomorphism of regular local rings such that $m_B=m_AB$.
Assume that the map on residue fields is an isomorphism $k_A \cong k_B$.
Then for all integers $r$, the map on relative Grothendieck-Witt spectra 
$$GW^{[r]}(A \on m_A) \stackrel{\sim}{\longrightarrow} GW^{[r]}(B\on m_B)$$
is an equivalence.
The same holds for $KO^{[r]}$, $L^s$, $L^{gs}$ and $HL$ in place of $GW^{[r]}$.
\end{proposition}

\begin{proof}
The proof works without the regularity assumption for any Noetherian local rings $A$ and $B$ (provided we replace $GW$ with its Karoubi localising version $\mathbb{GW}$).
We prove the statement first for $GW^{[r]}$.
Since the proposition holds for K-theory \cite[Proposition 3.19]{TT} in place of $GW^{[r]}$, it suffices to prove the case $r=0$ in view of the fundamental fibre sequence (\ref{eq:FundSeq}).
For a divisorial scheme $X$, denote by $D_b(X) = \qis^{-1}\Ch_b(X)$ the derived (stable infinity) category of bounded chain complexes of finite rank vector bundles over $X$.
If $(R,m_R)$ is a local ring, denote by $D_b(R\on m_R) = \qis^{-1}\Ch_b(R\on m_R)$ the derived (stable infinity) category of bounded chain complexes of finitely generated free $R$-modules with finite length homology $\Ch_b(R\on m_R)$.
For regular local rings $(R,m_R)$, the Poincare-Verdier sequence \cite[Corollary 4.2.12]{CHN}
\begin{equation}
\label{eqn:FlatExcVerdier}
(D_b(R\on m_R),\Qoppa_R^{gs})\to (D_b(R),\Qoppa_R^{gs}) \to (D_b(\Spec R - m_R),\Qoppa_{\Spec R - m_R}^{gs})
\end{equation}
shows that the fibre $GW^{[0]}(R \on m_R)$ of $GW^{[0]}(R) \to GW^{[0]}(\Spec R - m_R)$ is the hermitian $K$-theory of genuine symmetric forms of the category $(Ch_b(R\on m_R),\qis)$.
Here we used regularity of $R$ to ensure that the second map in (\ref{eqn:FlatExcVerdier}) is essentially surjective.

The main point of Thomason's proof of the $K$-theory analogue in \cite[Proposition 3.19]{TT} is that the map $A \to B$ induces an equivalence of associated derived categories
$D_b(A\on m_A) \stackrel{\sim}{\longrightarrow} D_b(B\on m_B)$.
For regular Noetherian local rings, this is also a consequence of the fact that in this case $D_b(R\on m_R)$ is the bounded derived category of the abelian category $M_{fl}(R)$ of finite length $R$-modules, and $B\otimes_A:M_{fl}(A) \to M_{fl}(B)$ is an equivalence of categories.
Thus, in order to show that $B\otimes_A: (D_b(A\on m_A),\Qoppa^{gs}_A) \to (D_b(B\on m_B),\Qoppa^{gs}_B)$ is an equivalence of Poincare infinity categories (and hence induces an equivalence of $GW$-spectra), we need to show that for any complex $C \in D_b(A\on m_A)$ the map of spectra of quadratic forms $\Qoppa^{gs}_A(C) \to \Qoppa^{gs}_B(B\otimes_AC)$ is an equivalence.
It suffices to show that $\Omega^{\infty}\, \Qoppa^{gs}_A(C) \to \Omega^{\infty}\, \Qoppa^{gs}_B(B\otimes_AC)$ is an equivalence for all $C \in D_b(A\on m_A)$ as the functor $\Qoppa^{gs}_A$ is determined by the functor $\Omega^{\infty}\, \Qoppa^{gs}_A$ as a canonical (non-connective) delooping.

Recall from \cite{FormIII} that for $C \in \Ch_b(R\proj)$, we have
$$\Omega^{\infty}\, \Qoppa^{gs}_R(C) = Q^{gs}(\Delta^{\bullet}C),\hspace{3ex}\text{
where}\hspace{3ex}Q^{gs}_R(C)=\Hom_R(C,C^{\sharp_R})^{C_2},$$
 $C^{\sharp_{R}}$ is the internal hom complex $C^{\sharp_{R}} =[C,R]_R$, and $\Delta^{\bullet}C$ is the cosimplicial chain complex $n \mapsto \Delta^{n}C$ which in degree $n$ is the tensor product over $\Z$ of $C$ with the normalized chain complex of the standard simplicial set $\Delta^n$.
Note that $\Hom_R(E_1,E_2)$ is naturally an $R$-module for any $E_i \in \Ch_b(R\proj)$.
In particular, this is the case for $E_1=C$ and $E_2=C^{\sharp_R}$, and  $Q^{gs}_R(C)=\Hom_R(C,C^{\sharp_R})^{C_2}$ is an $R$ module as the fixed set under a $C_2$ action through $R$-module homomorphisms.
This defines a linear action\footnote{The linear action is different from the action of $R$ on $Q^{gs}_R(C)$ as a quadratic functor.
The latter is given by the square of the linear action.}  of $R$ on $Q^{gs}_R(C)$.
In the following, we consider $Q^{gs}_R$ equipped with this linear action.
For $C \in \Ch_b(R\proj)$, we have
$
Q^{gs}_B(C\otimes_AB) = \Hom_B(C\otimes_AB,(C\otimes_AB)^{\sharp_B})^{C_2} = \Hom_A(C,[C,B]_A)^{C_2}  = (\Hom_A(C,C^{\sharp_A})\otimes_AB)^{C_2} = (\Hom_A(C,C^{\sharp_A})^{C_2}\otimes_AB
$
where the last equation holds since $B$, being flat over $A$, is a filtered colimit of finitely generated free $A$-modules.
The map $\Omega^{\infty}\, \Qoppa^{gs}_A(C) \to \Omega^{\infty}\, \Qoppa^{gs}_B(C\otimes_AB)$ therefore is the map
\begin{equation}
\label{eq:FlatExcAlt}
Q^{gs}_A(\Delta^{\bullet}C) \to Q^{gs}_A(\Delta^{\bullet}C)\otimes_AB: \xi \mapsto \xi \otimes 1
\end{equation}
which we have to show is a weak equivalence of simplicial sets in case $C$ has finite length homology.
We note that for such complexes $C$, the homotopy groups of $Q^{gs}_A(\Delta^{\bullet}C)$ are also finite length $A$-modules.
Indeed, they are finitely generated $A$-modules that are supported at the closed point $m_A$ of $\Spec A$ since
for $f\in m_A$, we have $Q^{gs}_A(\Delta^{\bullet}C)_f = Q^{gs}_{A_f}(\Delta^{\bullet}C_f)$ is contractible because $C_f \in D_b(A_f)$ is contractible and $Q^{gs}(\Delta^{\bullet})$ preserves chain homotopy equivalences \cite[Proposition 2.2.6]{FormIII}.
Finally, as mentioned above, for simplicial $A$-modules $E$ whose homotopy $A$-modules have finite length, the map $E \to E \otimes_AB:x \mapsto x\otimes 1$ is an equivalence.
Consequently, the map (\ref{eq:FlatExcAlt}) is an equivalence.
\end{proof}

In our proof of Theorems \ref{thm:MainL} and \ref{thm:MainGW}, we require a specific form of the projective line bundle formula for $GW$.
In \cite[Proposition 6.2.1]{CHN} (for $m=0$), the existence of a a split Poincare-Verdier sequence
\begin{equation}
\label{eq:P1CHN}
D_b(X,\Qoppa^{gs}) \stackrel{p^*}{\longrightarrow} D_b(\ppp^1_X,\Qoppa^{gs}) \stackrel{p_*(-\otimes O(-1))}{\longrightarrow} D_b(X,(\Qoppa^{gs})^{[-1]})
\end{equation}
was proved. 
We need to know that the splitting in (\ref{eq:P1CHN}) is given by the cup product with an element $\beta \in GW^{[1]}_0(\ppp^1_{\Z})$.
The element $\beta$ was defined in \cite[\S 9.4]{myJPAA} and exists over $\Z$.
Indeed, for a commutative ring $k$, consider the category $\Vect(\ppp^1)$ of finite rank vector bundles over the projective line $\ppp^1_R = \operatorname{Proj}(k[S,T])$ equipped with the duality $E\mapsto Hom_{O_{\ppp^1}}(E, O_{\ppp^1})$.
By functoriality, the category $\Met\Vect(\ppp^1)=\Fun([1],\Vect(\ppp^1))$ of morphisms in $\Vect(\ppp^1)$ is an exact category with duality.
In that category, we have an object $S:O_{\ppp^1}(-1) \to O_{\ppp^1}$ (multiplication by $S \in k[S,T]$) equipped with the symmetric form $\tilde{\beta}$
$$\xymatrix{\ar@{}[d]_{\tilde{\beta}:} &
O_{\ppp^1}(-1) \ar[r]^{S} \ar[d]_T & O_{\ppp^1}\ar[d]^T\\
& O_{\ppp^1} \ar[r]_S & O_{\ppp^1}(1)}$$
(multiplication by $T$).
We consider $\tilde{\beta}$ as a symmetric form in the exact category $\Met\Ch_b\Vect(\ppp^1) =\Fun([1],\Ch_b\Vect(\ppp^1))$ of morphisms of  complexes via the embedding of $\Vect(\ppp^1)$ into $\Ch_b\Vect(\ppp^1)$ as complexes concentrated in degree $0$.
Equipped with the point-wise quasi-isomorphisms, $(\Met\Ch_b\Vect(\ppp^1),\qis)$ is a model for the metabolic category \cite{CDH+II} of the stable infinity category $D_b\Vect(\ppp^1)$.
By \cite{CDH+II}, or by direct inspection using the canonical semi-orthononal decomposition of the middle term, we have a split Poincare-Verdier sequence
{\small
$$(\Ch_b\Vect(\ppp^1),\Qoppa^{gs},\qis) \stackrel{C \mapsto id_C}{\longrightarrow} (\Met\Ch_b\Vect(\ppp^1),\Qoppa^{gs},\qis) \stackrel{\cone}{\longrightarrow} (\Ch_b\Vect(\ppp^1),(\Qoppa^{gs})^{[1]},\qis).$$
}
In particular, if we denote by $w$ the set of maps in $\Met\Ch_b\Vect(\ppp^1)$ that become quasi-isomorphisms in $(\Ch_b\Vect(\ppp^1),\qis)$ upon applying the cone functor, we obtain an equivalence of Poincare infinity categories 
$$\cone: (\Met\Ch_b\Vect(\ppp^1),\Qoppa^{gs},w) \stackrel{\sim}{\longrightarrow} (\Ch_b\Vect(\ppp^1),(\Qoppa^{gs})^{[1]},\qis).$$
Since the map $\tilde{\beta}$ is a quasi-isomorphism in $\Ch_b\Vect(\ppp^1)$ it defines a non-degenerate form in $(\Met\Ch_b\Vect(\ppp^1),\Qoppa^{gs},w)$.
In particular, its cone $\beta_R=\cone(\tilde{\beta})$ defines a non-degererate form in $(\Ch_b\Vect(\ppp^1,(\Qoppa^{gs})^{[1]},\qis)$.
This defines the element 
$$\beta_R \in GW^{[1]}_0(\ppp_R^1).$$
For $R=\Z$, we simply write $\beta$ in place of $\beta_{\Z}$.
\vspace{1ex}

\begin{proposition}
\label{prop:P1bdl}
For any commutative ring $R$ and all integers $r\in \Z$, the following is an equivalence
\begin{equation}
\label{eq:P1}
(p^*,\beta\cup \underline{\phantom{x}}): GW^{[r]}(R) \oplus GW^{[r-1]}(R) \stackrel{\sim}{\longrightarrow} GW^{[r]}(\ppp^1_R)
\end{equation}
The same holds with $KO^{[r]}_n$ ($L^{s}_{n-r}$, $L^{gs}_{n-r}$, and $HL_{n-r}$, respectively) in place of $GW^{[r]}_n$.
\end{proposition}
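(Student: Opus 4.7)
The plan is to deduce the relative projective line bundle formula from the absolute one of \cite[Proposition 6.2.1]{CHN}, and then propagate the result to the other theories via the $GW$-module structure set up earlier in this section.

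Applied to a divisorial scheme $Y$, \cite[Proposition 6.2.1]{CHN} gives a split fibre sequence
$$GW^{[n]}(Y) \stackrel{p^*}{\longrightarrow} GW^{[n]}(\ppp^1_Y) \stackrel{p_*((-)\otimes O(-1))}{\longrightarrow} GW^{[n-1]}(Y).$$
The first step is to identify $\beta \cup p^*(-) : GW^{[n-1]}(Y) \to GW^{[n]}(\ppp^1_Y)$ as a section of the right-hand map. By the projection formula, the composition sends $y \in GW^{[n-1]}(Y)$ to $p_*(\beta \otimes O(-1)) \cup y$, so the issue reduces to the universal normalisation $p_*(\beta \otimes O(-1)) = \langle 1 \rangle \in GW^{[0]}_0(\Z)$. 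This can be read off from the explicit construction of $\beta = \cone(\tilde\beta)$ inside the metabolic category, where both $p_*$ and the twist by $O(-1)$ admit concrete chain-level descriptions.

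Once the absolute formula is in place, the relative version for $GW$ follows by naturality: using $\ppp^1_X \setminus \ppp^1_Z = \ppp^1_{X\setminus Z}$, I apply the absolute equivalence to both $X$ and $X \setminus Z$ and take the fibres of the vertical restriction maps. The formation of fibres commutes with direct sums, and both $p^*$ and $\beta \cup (-)$ are natural in the scheme variable, so this produces the desired equivalence (\ref{eq:P1}).

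For the remaining theories, recall from the discussion preceding the proposition that $KO$, $L^{gs}$, $L^s$, and $H$ are obtained from $GW$ by inverting $\sigma$, inverting $\eta$, inverting $\sigma\eta$, or taking the fibre of a $GW$-module map, respectively. Each of these operations preserves split fibre sequences of $GW$-module spectra and commutes with the $GW$-linear maps $p^*$ and $\beta \cup (-)$; applying them to the equivalence for $GW$ yields the analogous equivalences for $KO^{[n]}$, $L^s$, $L^{gs}$, and $H$. The main obstacle is the verification of the normalisation $p_*(\beta \otimes O(-1)) = \langle 1 \rangle$, which requires unpacking the abstract splitting of \cite[Proposition 6.2.1]{CHN} (coming from the semi-orthogonal decomposition $D_b(\ppp^1) = \langle O(-1), O \rangle$) and matching it with cup product by the explicit Koszul-type form $\beta$ constructed via the metabolic category; everything else is a formal consequence of naturality and the $GW$-module structure.
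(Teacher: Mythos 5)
Your strategy differs genuinely from the paper's, and it has a gap at precisely the step you flag as ``the main obstacle.'' You propose to exhibit $\beta\cup p^*(-)$ as a section of the CHN splitting $p_*((-)\otimes O(-1))$, reducing to the normalisation $p_*(\beta\otimes O(-1))=\langle 1\rangle$ in $GW^{[0]}_0(\Z)$. But you never actually verify this identity; you merely assert it ``can be read off'' from the Koszul description of $\beta=\cone(\tilde\beta)$. Carrying out that chain-level pushforward, matching it against the abstract splitting coming from the semi-orthogonal decomposition $D_b(\ppp^1)=\langle O(-1),O\rangle$, and checking the sign/twist conventions for $\Qoppa^{gs}$ is exactly the content that requires proof. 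Moreover, the claim $p_*(\beta\otimes O(-1))=\langle 1\rangle$ is stronger than what is needed (being a unit would suffice), and depending on conventions it may in fact equal some other unit; asserting the specific value without computation is not safe.

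The paper's proof avoids this computation entirely and is worth comparing. After reducing to $Z=X$ affine, it observes that because the analogous formula is known for $K$-theory, the fundamental fibre sequence reduces the claim to a single shift $n=1$. At that point one works with duality-preserving (Poincar\'e) functors rather than with elements: the composite
$$(\Ch_b\Vect(R),\Qoppa^{gs})\xrightarrow{\ \otimes\beta\ }(\Ch_b\Vect(\ppp^1),(\Qoppa^{gs})^{[1]})\longrightarrow (\Ch_b\Vect(\ppp^1),(\Qoppa^{gs}_w)^{[1]},w)\simeq(\Ch_b\Vect(R),\Qoppa^{gs})$$
is already an equivalence of underlying stable $\infty$-categories (from the $K$-theory case), so it is determined by the induced map $f_R\colon\Qoppa^{gs}(R)\to\Qoppa^{gs}(R)$ on the generator $R$. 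Since $f$ preserves Poincar\'e objects, $f_R$ carries units to units; and since $\Qoppa^{gs}(R)=B_{\Qoppa^{gs}}(R,R)=\Hom_R(R,R)$ coincides with its bilinear part on $R$, the map $f_R$ is $R$-linear, hence multiplication by a unit, hence an isomorphism. This formal argument renders the normalisation you need unnecessary, and it also handles the quadratic (non-bilinear) part of $\Qoppa^{gs}$, a subtlety your projection-formula argument does not explicitly address. Your propagation to $KO$, $L^{gs}$, $L^s$, $H$ via inverting $\sigma$, $\eta$, $\sigma\eta$ and taking fibres, and the reduction of the relative case to the absolute one by naturality, both match the paper's proof.
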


\begin{proof}
Since the analogous theorem holds for $K$-theory in place of $GW^{[n]}$, it suffices to prove the proposition for $n=1$, in view of the fundamental sequence (\ref{eq:FundSeq}).
We need to check that the composition of duality preserving functors
$$(\Ch_b\Vect(R),\Qoppa^{gs},\qis) \stackrel{\otimes \beta}{\longrightarrow} (\Ch_b\Vect(\ppp^1),(\Qoppa^{gs})^{[1]},\qis) \longrightarrow (\Ch_b\Vect(\ppp^1),(\Qoppa_w^{gs})^{[1]},w)$$
is an equivalence of Poincare infinity categories where $w$ is the set of morphisms that are isomorphisms in the quotient category $D_b\Vect(\ppp^1)/p^*D_b\Vect(R)$
and $\Qoppa^{gs}_w$ is the Kan extension of $\Qoppa^{gs}$ from $D_b\Vect(\ppp^1)$ to the quotient $D_b\Vect(\ppp^1)/p^*D_b\Vect(R)$.
From the split Verdier sequence (\ref{eq:P1CHN}) of \cite{CHN}, there is an equivalence Poincare infinity categories
$$(\Ch_b\Vect(\ppp^1),(\Qoppa_w^{gs})^{[1]},w) \simeq (\Ch_b\Vect(R),\Qoppa^{gs},\qis)$$
and thus a functor of Poincare infinity categories 
$$f:(\Ch_b\Vect(R),\Qoppa^{gs},\qis) \stackrel{\otimes \beta}{\longrightarrow} (\Ch_b\Vect(\ppp^1),(\Qoppa_w^{gs})^{[1]},w) \simeq (\Ch_b\Vect(R),\Qoppa^{gs},\qis).$$
This functor is an equivalence of stable infinity categories.
Since the generator $R$ on the left side goes to the generator $R$ on the right side, the map $f$ of Poincare infinity categories is determined by the map on quadratic functors on the generator $R$
$$f_R: R = \Qoppa^{gs}(R) \to \Qoppa^{gs}(R) = R.$$
Since $f$ sends non-degenerate forms to non-degenerate forms, the map $f_R: \Qoppa^{gs}(R) \to \Qoppa^{gs}(R)$ sends units to units.
Moreover, that map is bilinear, hence an isomorphism, since $\Qoppa^{gs}(R) = B_{\Qoppa^{gs}}(R,R)=Hom_R(R,R)$ agrees with its bilinear part on $R$.
This proves the statement for $GW$.
The statements for $KO$, $L^{gs}$, $L^s$ and $HL$ follow by localising at $\sigma$, $\eta$, $\sigma\eta$ and taking homotopy fibres.
\end{proof}

\begin{proposition}
\label{prop:ZarDesent}
The fibre $HL$ of $L^{gs} \to L^s$ satisfies Zariski descent on divisorial schemes.
In particular, if $X$ is finite dimensional divisorial Noetherian scheme such that $HL_n(O_{X,x})=0$ for $n\geq n_0$ for all $x\in X$, then $HL_n(X)=0$ for all $n\geq n_0$.
\end{proposition}

\begin{proof}
The fibres of the map $L(\Qoppa) \to \mathbb{L}(\Qoppa)$ from $L$-theory to their Karoubi localising variants are independent of the quadratic functor $\Qoppa$ and equal the fibre of $K^{tC_2} \to \mathbb{K}^{tC_2}$; see \cite[\S 2.3]{CHN}.
In particular, $HL$ is also the fibre of the map $\mathbb{L}^{gs} \to \mathbb{L}^s$  of functors both of which satisfy Zariski descent \cite{CHN}.
For a scheme $X$ as in the proposition, we therefore have the strongly convergent Brown-Gersten spectral sequence \cite{BrownGersten}
$$H^p_{Zar}(X,\HL_{-q}(O_X)) \Rightarrow HL_{-p-q}(X)$$
where $\HL_{-q}(O_X)$ is the sheaf associated with the presheaf $U \mapsto HL_{-q}(\Gamma(U,O_X))$ in the Zariski topology.
Hence, the vanishing of $HL_n$ at the stalks of $X$ for $n\geq n_0$ implies $HL_n(X)=0$ for all $n\geq n_0$.
\end{proof}

\section{Agreement and Homotopy invariance}

Recall that $HL$ denotes the fibre of $L^{gs} \to L^s=L^{gs}[\sigma^{-1}]$.
The goal is to prove that $HL_n(R)=0$ for all regular rings $R$ as in Theorem \ref{thm:MainL} and $n\geq -2$ ($n\geq -3$ in the mixed characteristic case).
If we denote by $F$ the field of fractions of $R$, we would like to prove a version of the Gersten conjecture to show that $HL_n(R)$ injects in to $HL_n(F)$ and then use the known vanishing of the $HL_n$ groups for $F$.
We note though that we cannot directly work with the theories $L^{gs}$ and $L^s$ since the Gersten conjecture fails to hold for $L^{gs}$ and $L^s$ in the mixed characteristic $(0,2)$ case as the following example shows.

\begin{remark}
\label{rem:LThGersten}
Let $V$ be a dvr with characteristic $0$ field of fractions $F$ and characteristic $2$ residue field $k$.
In this case, the Gersten complexes for $L^{gs}$ and $L^s$ fail to be exact.
Indeed, localisation together with devissage \cite{CDH+III} yields the exact sequence
$$ L^s_2(F) \to L^s_2(k) \to L^s_1(V) \to L_1^s(F) \to L^s_1(k)$$
But $L^s_2(F)=L^s_1(F)=L^s_1(k)=0$ \cite{CDH+III} so that 
the map $L^s_1(V) \to L_1^s(F)$, which is also the map $L^{gs}_1(V) \to L_1^{gs}(F)$ \cite{CDH+III}, is the non-injective map $L_2^s(k) = W(k) \to 0$.
\end{remark}

To prove vanishing of $HL$ in degrees $\geq -2$ (respectively $\geq -3$), let us first record the following.

\begin{proposition}
\label{prop:2invertible}
Let $R$ be a ring with $2\in R^*$. Then the comparison map $L^{gs}(R) \to L^s(R)$ is an equivalence.
In particular, $HL(R)=0$.
\end{proposition}

\begin{proof}
This is for instance \cite[Example 3.2.8]{CDH+I}.
\end{proof}

By Proposition \ref{prop:ZarDesent}, it suffices to prove $H_n(R)=0$ for all regular local rings $(R,m,k)$ as in Theorem \ref{thm:MainL} and $n\geq -2$.
We will do so by induction on the dimension of the local ring, the induction step is provided by the following.

\begin{proposition}
\label{prop:MiniGerstenConj}
Let $(A,m_A)$ be a local ring and consider the local $A$-algebra $(B,m_B) = (A[X]_{(m_A,X)}, (m_A,X))$.
Then the map $HL_n(B) \to HL_n(B[X^{-1}])$ is injective for all integers $n\in \Z$.
\end{proposition}

\begin{proof}
Consider the projective line $p: \ppp^1_A \to \Spec A$, the sections $s_t:\Spec A \to \ppp^1_A$ for $t=0,\infty$ and the open inclusion $j:\aaa^1_A = \ppp^1_A-s_0 \subset \ppp^1_A$.
We have the commutative diagrams
$$\xymatrix{
HL_n(\ppp^1_A\on s_0) \ar[r] \ar[d]_{j^*}^{\cong} &  HL_n(\ppp^1_A)  \ar[d]_{j^*}  \ar[r]^{s_{\infty}^*} & HL_n(A) \ar[dl]^{p^*}  \\
HL_n(\aaa^1_A\on s_0)  \ar[r] & HL_n(\aaa^1_A) &}$$
where the vertical maps are restrictions and the horizontal maps are extension of support.
The left square commutes by functoriality, and 
the right triangle commutes as it commutes when precomposed with the isomorphism of Proposition \ref{prop:P1bdl}.
$$(p^*,\beta):HL_n(A) \oplus HL_{n+1}(A) \cong HL_n(\ppp^1_A)$$
 noting that $j^*\beta = s_t^*\beta=0$, by Lemma \ref{lem:GW10ZX}.
The left vertical map is an isomorphism, by Zariski excision.
Since the composition of the two top horizontal maps is zero, so is the lower horizontal map.
Now, consider the commutative square
$$\xymatrix{
 HL_n(A[X]\on X) \ar[r]^0 \ar[d]_{\cong} &  \ar[d] HL_n(A[X]) & \\
 HL_n(A[X]_{(m_A,X)}\on X)\ar[r] & HL_n(A[X]_{(m_A,X)}) \ar[r]&HL_n(A[X]_{(m_A,X)}[X^{-1}]) }$$
where the left vertical arrow is an isomorphism, by Zariski excision, and the top horizontal map is zero, as shown above. 
Therefore, the bottom left horizontal arrow is zero.
Since the bottom row is exact, the bottom right horizontal map is injective.
\end{proof}

In order to prove the local ring case of Theorem \ref{thm:MainL}, we will use the Cohen Structure Theorems for regular local rings.
Recall \cite[Theorem 29.7]{Matsumura} that the completion $\hat{R}$ of an equicharacteristic Noetherian regular local ring $R$ is a power series ring $\hat{R}\cong K[[X_1,...,X_d]]$ over a field $K$, called coefficient field of $\hat{R}$.
In the mixed characteristic case, we will use the following variant.
Recall \cite{FoxbyEtc} that a local homomorphism of local rings $(A,m_A) \to (B,m_B)$ is called {\em weakly regular} if it is flat and the closed fibre $B/m_AB$ is regular.
If $A$ is a dvr with uniformizer $\pi$, then $B$ is weakly regular over $A$ if and only if $B$ is regular, $A \subset B$, and $\pi \in m_B\setminus m_m^2$.

\begin{proposition}
\label{prop:CohenFlattening}
Let $(R,m,k_R)$ be a $d$-dimensional Noetherian regular local ring that is weakly regular over a dvr.
Then its completion $\hat{R}$ at the maximal ideal $m$ is a power series ring $\hat{R}\cong \ooo[[X_2,...,X_d]]$ over a complete dvr $\ooo$.
\end{proposition}

\begin{proof}
By assumption, $d\geq 1$.
The equicharacteristic case is a special case of the Cohen Structure Theorems cited above.
So, assume that $R$ is of mixed characteristic $(0,p)$ for a prime $p\in \Z$.
By assumption, $R$ contains a dvr $(V, \pi)$ of mixed characteristic $(0,p)$ such that $V \to R$ is weakly regular.
Let $C_R \subset \hat{R}$ and $C_{V}\subset \hat{V}$ be coefficient rings of their completions \cite[Theorem 29.7]{Matsumura}.
These are complete dvrs with uniformizing element $p$ such that the maps on residue fields are isomorphisms $C_{V}/p \cong \hat{V}/\pi$ and $C_R/p \cong \hat{R}/m_{\hat{R}}$.
Then $\hat{V}$ is an Eisenstein extension of $C_{V}$ \cite[Theorem 29.8]{Matsumura}, that is, there is a polynomial $f = T^n+a_{n-1}T^{n-1} + \cdots a_1T + a_0\in C_{V}[T]$ such that $p\mid a_i$ for $i=0,...,n-1$, $p^2 \nmid a_0$ and $\hat{V}\cong C_V[T]/f$.
Let $x\in \hat{V}$ be the element corresponding to $T\in C_V[T]/f$.
This is a uniformizer for the dvr $\hat{V}$.
In particular, as an element of $\hat{R}$, we have $x \in m_{\hat{R}}\setminus m^2_{\hat{R}}$.
The ring homomorphism $C_{V} \to C_{V}/p  = V/\pi \to R/m_R = C_R/p$ lifts to a local homomorphism $C_{V} \to C_R$ \cite[Theorem 29.2]{Matsumura}.
Now, we set $\ooo=C_R[T]/f$ and consider the local $C_V$-algebra homomorphism $\ooo \to \hat{R}$ that sends $T$ to $x\in \hat{V}\subset \hat{R}$.
As an Eisenstein extension of the complete dvr $C_R$, the ring $\ooo$ is a complete dvr with uniformizer $x\in \ooo$ and residue field $C_R/p\cong \ooo/x$.
The local homomorphism $\ooo \to \hat{R}$ is an injection into a domain, hence flat, and it is weakly regular because $x \in m_{\hat{R}}\setminus m^2_{\hat{R}}$.
Since $0 \neq x \in k_R=m_{\hat{R}}/m^2_{\hat{R}}$, we can extend $x\in \hat{R}$ to a regular sequence of parameters $x,x_2,x_3,...,x_d$ of $\hat{R}$ and define a local homomorphism of complete $\ooo$-algebras $\ooo[[X_2,...,X_d]] \to \hat{R}$ sending $X_i$ to $x_i$.
Since this is a homomorphism of complete regular local rings that is an isomorphism on residue fields and sends a regular system of parameters to a regular system of parameters, this map is an isomorphism.
\end{proof}

\begin{proposition}
\label{prop:LocalL}
Let $R$ be a regular local ring.
If $R$ contains a field of characteristic $2$ then $HL_n(R)=0$ for $n\geq -2$.
If $R$ contains a dvr $V$ of mixed characteristic $(0,2)$ such that $R$ is weakly regular over $V$, then
$HL_n(R)=0$ for $n\geq -3$.
\end{proposition}

\begin{proof}
We prove the statement by induction on the dimension $d$ of the regular local ring $R$.
The case $d\leq 1$ is Corollary \ref{cor:LvsLDVR}.
So, we can assume $d\geq 2$.
Let $\hat{R}$ be the completion of $R$ at its maximal ideal $m$ and write $X=\spec R$ and $\hat{X}=\spec \hat{R}$ with closed points $z\in X$ and $\hat{z} \in \hat{X}$.
By Flat Excision (Proposition \ref{prop:FlatExc}), we have an exact sequence
$$HL_{n+1}(\hat{X}-\hat{z})  \to HL_n(R) \to HL_n(X-z) \oplus HL_n(\hat{R}) \to HL_{n}(\hat{X}-\hat{z}).$$
Note that the local rings of $R$ and of $\hat{R}$ either contain a field or are weakly regular over $V$.
Thus, the induction hypothesis together with Zariski-descent (Proposition \ref{prop:ZarDesent}) imply that $HL_n(X-z) = HL_{n}(\hat{X}-\hat{z})=0$ for $n\geq -2$ (for $n\geq -3$ in the mixed characteristic case).
In particular, $HL_n(R) \cong HL_n(\hat{R})$ for $n \geq -2$ (for $n\geq -3$ in the mixed characteristic case).
By the Cohen Structure Theorems for equicharacteristic complete regular local rings and its mixed characteristic variant in Proposition \ref{prop:CohenFlattening}, we have $\hat{R}\cong \ooo[[X_2,...,X_d]]$ for a complete dvr $(\ooo,\pi)$ of residue characteristic $2$.
By the argument above applied to the completion $\ooo[[X_2,...,X_d]]$ of the regular local ring $\ooo[X_2,...,X_d]_M$ for $M=(\pi,X_2,...,X_d)$, 
we have $HL_n(\ooo[[X_2,...,X_d]]) = HL_n(\ooo[X_2,...,X_d]_M)$ for $n\geq -2$ (for $n\geq -3$ in the mixed characteristic case).
Consider the ring $A=\ooo[X_2,...,X_{d-1}]$ and the maximal ideal $m_A=(\pi,X_2,...,X_{d-1})$.
Then $(\ooo[X_2,...,X_d]_M,M) = (A[X]_{(m_A,X)},(m_A,X))$ with $X=X_d$.
By Proposition \ref{prop:MiniGerstenConj}, the map 
$$HL_n(\ooo[X_2,...,X_d]_M) \hookrightarrow HL_n(\ooo[X_2,...,X_d]_M[X_d^{-1}])$$ is injective for all integers $n\in \Z$.
We note that the $d-1$-dimensional regular ring $\ooo[X_2,...,X_d]_M[X_d^{-1}]$ has local rings that contain a field or are weakly regular over $\ooo$.
Hence, it has vanishing $HL_n$ for $n\geq -2$ (for $n\geq -3$ in the mixed characteristic case), by induction hypothesis and Zariski descent (Proposition \ref{prop:ZarDesent}).
Hence, for $n\geq -2$ (for $n\geq -3$ in the mixed characteristic case), we have
$$HL_n(R) \cong HL_n(\hat{R})= HL_n(\ooo[[X_2,...,X_d]]) \subset  HL_n(\ooo[X_2,...,X_d]_M[X_d^{-1}])=0.$$
\end{proof}

\begin{theorem}
\label{thm:MainLText}
Let $R$ be a regular Noetherian domain of finite Krull dimension.
\begin{enumerate}
\item
If $0=2\in R$, then the comparison map
$$L_n^{gs}(R) \to L_n^s(R)$$
is an isomorphism for $n \geq -1$ and a monomorphism for $n=-2$.
\item
If $0\neq 2\in R$, assume that the $(0,2)$ mixed characteristic local rings of $R$ are weakly regular over a dvr. 
Then the comparison map
$$L_n^{gs}(R) \to L_n^s(R)$$
is an isomorphism for $n \geq -2$ and a monomorphism for $n=-3$.
\end{enumerate}
\end{theorem}

\begin{proof}
Note that the local rings of $R$ either contain a field, have $2$ as a unit, or are weakly regular over a dvr of mixed characteristic $(0,2)$.
Now, the result follows from Proposition \ref{prop:LocalL} in view of Proposition \ref{prop:ZarDesent}.
\end{proof}

\begin{corollary}
\label{cor:MainGWText}
Let $X$ be a finite dimensional regular Noetherian integral separated scheme.
\begin{enumerate}
\item
If $X$ is defined over $\F_2$, then the comparison maps from genuine symmetric to symmetric Hermitian $K$-theory
$$GW^{[r]}_n(X) \to KO^{[r]}_n(X)$$
are isomorphisms for $n \geq r-1$ and a monomorphism for $n=r-2$.
\item
If $0 \neq 2 \in \Gamma(X,O_X)$, 
assume that the $(0,2)$ mixed characteristic stalks $O_{X,x}$ are weakly regular over a dvr.
Then the comparison maps from genuine symmetric to symmetric Hermitian $K$-theory
$$GW^{[r]}_n(X) \to KO^{[r]}_n(X)$$
are isomorphisms for $n \geq r-2$ and a monomorphism for $n=r-3$.
\end{enumerate}
\end{corollary}

\begin{proof}
By Zariski descent, we can assume that $X=\Spec(R)$ is affine.
Now, the theorem follows from Theorem \ref{thm:MainLText} since the fibre of $GW^{[r]}\to KO^{[r]}$ is the fibre of the map of spectra $S^r\wedge L^{gs} \to S^r\wedge L^s$, by \cite[Main Theorem]{CDH+II} and the identifications $L(R,\Qoppa^{[r]}) = S^r\wedge L(R,\Qoppa)$.
\end{proof}

\begin{theorem}
\label{thm:HtpyInvText}
Let $R$ be a regular Noetherian ring of finite Krull dimension.
Assume that the mixed characteristic $(0,2)$ local rings of $R$ are weakly regular over a dvr. 
Then the inclusion $R \subset R[T]$ induces homotopy equivalences of genuine symmetric and symplectic Hermitian K-theory spaces
$$\GW(R) \stackrel{\sim}{\longrightarrow} \GW(R[T]),\hspace{6ex} \KSp(R) \stackrel{\sim}{\longrightarrow} \KSp(R[T]).$$
\end{theorem}

\begin{proof}
The statements for $\pi_n\GW =GW^{[0]}_n$, $n\geq 0$, and for $\pi_n\KSp=GW^{[2]}_n$, $n\geq 1$, follow from Corollary \ref{cor:MainGWText} and the fact that $KO$ is homotopy invariant on regular rings \cite[Theorem 6.3.1]{CHN}.
The case $K_0Sp$ is known but also follows from the commutative diagram
$$\xymatrix{GW^{[2]}_0(R[X]) \ar@{->>}[d] \hspace{1ex} \ar@{>->}[r]  & KO^{[2]}_0(R[X])\ar@{=}[d]\\
GW^{[2]}_0(R) \hspace{1ex} \ar@{>->}[r]  & KO^{[2]}_0(R)}
$$ 
with vertical maps induced by the split surjective $R$-algebra homomorphism $R[X]\to R:X\mapsto 0$.
It follows that the left surjective map is also injective.
\end{proof}

\begin{remark}
\label{rmk:motivicRep}
Let $S$ be a regular ring.
The motivic Hermitian $K$-theory spectrum $KQ _S\in \mathcal{SH}(S)$ of \cite{CHN} is the sequence $(KO^{[0]}, KO^{[1]}, KO^{[2]}, KO^{[3]}, ...)$ of presheaves of $S^1$-spectra on smooth $S$-schemes with bonding maps given by the projective line bundle formula.
It is equivalently given by the sequence 
$$(\Omega^{\infty}_{S^1}\, KO^{[0]}, \Omega^{\infty}_{S^1}\, KO^{[1]}, \Omega^{\infty}_{S^1}\, KO^{[2]}, \Omega^{\infty}_{S^1}\, KO^{[3]}, ...)$$ 
of motivic spaces over $S$ with bonding maps induced by the projective line bundle formula.
Note that this sequence is $4$-periodic since $KO^{[n]}\simeq KO^{[n+4]}$.
It is $2$-periodic in characteristic $2$.

Denote by $\GW^{[n]}=\Omega^{\infty}_{S^1}\, GW^{[n]}$ the $n$-th shifted genuine symmetric Hermitian $K$-theory space.
So, $GW^{[0]}(R)=\GW(R)$ and $\GW^{[2]}(R)=\KSp(R)$ are the genuine symmetric and symplectic Hermitian $K$-theory spaces of $R$ from the introduction.
In view of Corollary \ref{cor:MainGWText}, if $\F_2 \subset S$, then the map from the  $\ppp^1$-spectrum given by the $2$-periodic sequence of genuine Hermitian $K$-theory spaces
$$(\GW^{[0]}, \GW^{[1]}, \GW^{[0]}, \GW^{[1]}, ... )$$
to $KQ_S$ is a motivic equivalence as it is level-wise a simplicial equivalence.
If $S$ is a Dedekind domain with $0\neq 2\in S$, then 
the map from the $\ppp^1$-spectrum given by the $4$-periodic sequence of genuine Hermitian $K$-theory spaces
\begin{equation}
\label{eq:rmk:motivicRep}
(\GW^{[0]}, \GW^{[1]}, \GW^{[2]}, \GW^{[3]}, \GW^{[0]}, \GW^{[1]}, \GW^{[2]}, \GW^{[3]}, ... )
\end{equation}
to $KQ_S$ is a motivic equivalence as it is level-wise a simplicial equivalence except, possibly, at the levels $3\mod 4$ where it is an equivalence in degrees $\geq 1$ and a monomorphism in degree $0$.
\end{remark}

\bibliographystyle{plain}

\begin{thebibliography}{CDH{\etalchar{+}}23}

\bibitem[AF23]{AsokFaselICM}
Aravind Asok and Jean Fasel.
\newblock Vector bundles on algebraic varieties.
\newblock In {\em I{CM}---{I}nternational {C}ongress of {M}athematicians.
  {V}ol. 3. {S}ections 1--4}, pages 2146--2170. EMS Press, Berlin, [2023]
  \copyright 2023.

\bibitem[AFH94]{FoxbyEtc}
Luchezar~L. Avramov, Hans-Bj\o~rn Foxby, and Bernd Herzog.
\newblock Structure of local homomorphisms.
\newblock {\em J. Algebra}, 164(1):124--145, 1994.

\bibitem[BG73]{BrownGersten}
Kenneth~S. Brown and Stephen~M. Gersten.
\newblock Algebraic {$K$}-theory as generalized sheaf cohomology.
\newblock In {\em Algebraic {$K$}-theory, {I}: {H}igher {$K$}-theories ({P}roc.
  {C}onf., {B}attelle {M}emorial {I}nst., {S}eattle, {W}ash., 1972)}, Lecture
  Notes in Math., Vol. 341, pages 266--292. Springer, Berlin-New York, 1973.

\bibitem[CDH{\etalchar{+}}a]{CDH+II}
Baptiste Calm\`es, Emanuele Dotto, Yonatan Harpaz, Fabian Hebestreit, Markus
  Land, Kristian Moi, Denis Nardin, Thomas Nikolaus, and Wolfgang Steimle.
\newblock {H}ermitian ${K}$-theory for stable $\infty$-categories {II}:
  {C}obordism categories and additivity.
\newblock {\em arXiv:2009.07224v4}.

\bibitem[CDH{\etalchar{+}}b]{CDH+III}
Baptiste Calm\`es, Emanuele Dotto, Yonatan Harpaz, Fabian Hebestreit, Markus
  Land, Kristian Moi, Denis Nardin, Thomas Nikolaus, and Wolfgang Steimle.
\newblock {H}ermitian ${K}$-theory for stable $\infty$-categories {III}:
  {G}rothendieck-{W}itt groups of rings.
\newblock {\em arXiv:2009.07225v3}.

\bibitem[CDH{\etalchar{+}}23]{CDH+I}
Baptiste Calm\`es, Emanuele Dotto, Yonatan Harpaz, Fabian Hebestreit, Markus
  Land, Kristian Moi, Denis Nardin, Thomas Nikolaus, and Wolfgang Steimle.
\newblock Hermitian {$K$}-theory for stable {$\infty$}-categories {I}:
  {F}oundations.
\newblock {\em Selecta Math. (N.S.)}, 29(1):Paper No. 10, 269, 2023.

\bibitem[CHN]{CHN}
Baptiste Calm\`es, Yonatan Harpaz, and Denis Nardin.
\newblock A motivic spectrum representing hermitian {$K$}-theory.
\newblock {\em arXiv:2402.15136v2}.

\bibitem[CTHK97]{CHK}
Jean-Louis Colliot-Th\'{e}l\`ene, Raymond~T. Hoobler, and Bruno Kahn.
\newblock The {B}loch-{O}gus-{G}abber theorem.
\newblock In {\em Algebraic {$K$}-theory ({T}oronto, {ON}, 1996)}, volume~16 of
  {\em Fields Inst. Commun.}, pages 31--94. Amer. Math. Soc., Providence, RI,
  1997.

\bibitem[GMV91]{Grunewald+MathZ}
Fritz Grunewald, Jens Mennicke, and Leonid Vaserstein.
\newblock On symplectic groups over polynomial rings.
\newblock {\em Math. Z.}, 206(1):35--56, 1991.

\bibitem[Gra76]{Grayson:KII}
Daniel Grayson.
\newblock Higher algebraic {$K$}-theory. {II} (after {D}aniel {Q}uillen).
\newblock pages 217--240. Lecture Notes in Math., Vol. 551, 1976.

\bibitem[Hor05]{Hornbostel}
Jens Hornbostel.
\newblock {$A^1$}-representability of {H}ermitian {$K$}-theory and {W}itt
  groups.
\newblock {\em Topology}, 44(3):661--687, 2005.

\bibitem[HS]{HebestreitSteimle}
F.~Hebestreit and W.~Steimle.
\newblock Stable moduli spaces of hermitian forms.
\newblock {\em arXiv:2103.13911}.

\bibitem[Kar80]{karoubi:annals}
Max Karoubi.
\newblock Le th\'{e}or\`eme fondamental de la {$K$}-th\'{e}orie hermitienne.
\newblock {\em Ann. of Math. (2)}, 112(2):259--282, 1980.

\bibitem[Kne70]{Knebusch}
Manfred Knebusch.
\newblock Grothendieck and {Witt} rings of nondegenerate symmetric bilinear
  forms.
\newblock Sitzungsber. {Heidelberger} {Akad}. {Wiss}., {Math}.-{Naturw}. {Kl}.
  1969/70, 3. {Abhdl}., 69 pp. (1970)., 1970.

\bibitem[Mat86]{Matsumura}
Hideyuki Matsumura.
\newblock {\em Commutative ring theory}, volume~8 of {\em Cambridge Studies in
  Advanced Mathematics}.
\newblock Cambridge University Press, Cambridge, 1986.
\newblock Translated from the Japanese by M. Reid.

\bibitem[MS]{FormIII}
D.~Marlowe and M.~Schlichting.
\newblock Higher {$K$}-theory of forms {III}. {F}rom chain complexes to derived
  categories.
\newblock {\em arXiv:2411.09401v1}.

\bibitem[Rea]{Read}
Thomas Read.
\newblock On the geometric fixed points of the real topological cyclic homology
  of $\mathbb{Z}/4$.
\newblock {\em arXiv:2411.11993v1}.

\bibitem[Sch10a]{myHermKex}
Marco Schlichting.
\newblock Hermitian {$K$}-theory of exact categories.
\newblock {\em J. K-Theory}, 5(1):105--165, 2010.

\bibitem[Sch10b]{myMV}
Marco Schlichting.
\newblock The {M}ayer-{V}ietoris principle for {G}rothendieck-{W}itt groups of
  schemes.
\newblock {\em Invent. Math.}, 179(2):349--433, 2010.

\bibitem[Sch17]{myJPAA}
Marco Schlichting.
\newblock Hermitian {$K$}-theory, derived equivalences and {K}aroubi's
  fundamental theorem.
\newblock {\em J. Pure Appl. Algebra}, 221(7):1729--1844, 2017.

\bibitem[SS18]{SchmidtStrunk}
Johannes Schmidt and Florian Strunk.
\newblock Stable {{\(\mathbb A^1\)}}-connectivity over {Dedekind} schemes.
\newblock {\em Ann. \(K\)-Theory}, 3(2):331--367, 2018.

\bibitem[ST15]{GirjaMe}
Marco Schlichting and Girja~S. Tripathi.
\newblock Geometric models for higher {G}rothendieck-{W}itt groups in
  {$\Bbb{A}^1$}-homotopy theory.
\newblock {\em Math. Ann.}, 362(3-4):1143--1167, 2015.

\bibitem[TT90]{TT}
R.~W. Thomason and Thomas Trobaugh.
\newblock Higher algebraic {$K$}-theory of schemes and of derived categories.
\newblock In {\em The {G}rothendieck {F}estschrift, {V}ol. {III}}, volume~88 of
  {\em Progr. Math.}, pages 247--435. Birkh\"{a}user Boston, Boston, MA, 1990.

\end{thebibliography}

\newcommand{\etalchar}[1]{$^{#1}$}

\end{document}